\newtheorem{Theorem}{Theorem}[section]
\newtheorem{Lemma}{Lemma}[section]
\begin{document}

\title{Four Blocks Cycles $C(k,1,1,1)$ in Digraphs}
\author{Zahraa Mohsen $^{1,2}$}
	
	\footnotetext[1]{Lebanese University, KALMA Laboratory, Baalbeck.}	
	\footnotetext[2]{University of Paris, IMJ Laboratory, Paris.}
	\date{}
\maketitle

  \begin{center}
\begin{abstract}
 A four blocks cycle $C(k_{1},k_{2},k_{3},k_{4})$ is an oriented cycle formed by the union of four internally disjoint directed paths of lengths $k_{1},k_{2},k_{3},$ and $k_{4}$ respectively. El Mniny\cite{Darine} proved that if D is a digraph having a spanning out-tree T with no subdivisions of C(k, 1, 1, 1), then the chromatic number of D is at most $8^{3}k$. In this paper, we will improve this bound to $18k$.

\end{abstract}

\end{center}
  
  \section{Introduction}
Throughout this paper, all the graphs are considered to be simple, that is with no loops and no multiple edges. By giving an orientation to each edge of a graph $G$, we obtain an oriented graph called a digraph, denoted by $D$. Reciprocally, the graph obtained from a digraph $D$ by ignoring the directions of its arcs is called the underlying graph of $D$, and denoted by $G(D)$. The chromatic number of a digraph $D$, denoted by $\chi(D)$, is the chromatic number of its underlying graph. In \cite{Cohen}, Cohen et al. proved that for any two digraphs $D_{1}$ and $D_{2}$, we have $\chi(D_{1} \cup D_{2})\leq \chi(D_{1}) \times \chi(D_{2})$.\\
\- \- Let $D$ be a digraph. An out-tree $T$ of $D$ is a sub-digraph of $D$ whose vertices are of in-degree 1 except for one vertex of in-degree 0, which is called the root of $T$.\\
\- \- Let $T$ be a spanning out-tree of a digraph $D$ rooted at $r$. For a vertex $x$ of $D$, there is a unique $rx$-directed path in $T$, denoted by $T_{[r,x]}$. The level of $x$ with respect to $T$, denoted by $l_{T}(x)$, is the length of this path. For a non-negative integer $i$, denote $L_{i}(T)$ to be the set of all the vertices having a level $i$ in $T$.\\
\- \- For $x\in V(T)$, the ancestors of $x$ are the vertices that belong to $T_{[r,x]}$. For an ancestor $y$ of $x$, we will write $y \leq_{T} x$ and we will denote by $T_{[y,x]}$ the directed path in $T$ from $y$ to $x$. For two vertices $x$ and $y$ of $T$, the least common ancestor, $z$, of $x$  and $y$ is the common ancestor of $x$ and $y$ having the highest level in $T$. Let $D'$ be a sub-digraph of a digraph $D$ and $T$ be a spanning out-tree of $D$. Let $x\in V(D')$, $x$ is said to be a minimal in $D'$ for $\leq_{T}$ if $\forall v \in V(D')$ satisfying $v\leq_{T} x$, we have $x \leq _{T} v$. Moreover, $x$ is said to be a maximal in $D'$ for $\leq_{T}$ if $\forall v \in V(D')$ satisfying $x\leq_{T} v$, we have $v \leq _{T} x$.  \\
\- \- An arc $(x,y)$ of $D$ is said to be forward (resp. backward) with respect to $T$ if $l_{T}(x) < l_{T}(y)$ (resp. $l_{T}(x)\geq l_{T}(y)$). For two adjacent vertices $x$ and $y$, we denote by $xy$ the arc between $x$ and $y$ if its orientation is unknown.\\
\- \- A maximal out-tree $T$ of $D$ is a spanning out-tree for which for any backward arc with respect to $T$, say $(x,y)$, there exists a $yx$-directed path in $T$. We can easily see that for a maximal out-tree $T$ of a digraph $D$, $L_{i}(T)$ is stable in $D$ for all $i \geq 0$.\\
\- \-  A digraph $D$ is said to be strong if for any two vertices $u$ and $v$ of $D$, there is a directed path from $u$ to $v$. One can easily notice that every strong digraph $D$ has a spanning out-tree. In deed, suppose not and let $T$ be an out-tree of $D$ with maximal number of vertices. Then there exists a vertex $x \in V(D)-V(T)$. Since $D$ is strong then there exists a vertex $y\in V(T)$ such that $(y,x) \in A(D)$. Let $T'=T\cup (y,x)$, an out-tree of $D$ with $V(T')>V(T)$, a contradiction.\\
\- In \cite{Darine}, El Mniny proved that any digraph $D$ that has a spanning out-tree admits a maximal out-tree. Consequently, every strong digraph admits a maximal out-tree.\\
\- \- Given an oriented path $P$ (resp. otiented cycle $C$), a block is a maximal directed subpath of $P$ (resp. $C$). We denote by $P(k_{1}, k_{2},\cdots, k_{n})$ (resp. $C(k_{1}, k_{2},\cdots,k_{n})$) the oriented path (resp. oriented cycle) formed of n blocks of lengths $k_{1}, k_{2},\cdots,$ and $k_{n}$ respectively. Moreover, for a block $B$ of an oriented cycle $C$ which is directed from $x$ to $y$, we denote by $x $ (resp. $y$) the source (resp. the sink) of $B$, and we write $B$ as $C_{[x,y]}$. Moreover, $x$ and $y$ are called the ends of $B$. In general, for an oriented cycle $C$ with s blocks, we will denote by $B_{i}$'s the blocks of $C$, such that $B_{2i}$ has ends $x_{i+1}$ and $y_{i}$ for all $1\leq i\leq \frac{s}{2}-1$ and $B_{2i+1}$ has ends $x_{i+1}$ and $y_{i+1}$ for all $0\leq i\leq \frac{s}{2}-1$.\\
\- \- A subdivision of a digraph $F$ is a digraph $F^{'}$ obtained by replacing each arc $(x,y)$ of $F$ by an $xy$-directed path of length at least 1.\\
\- \- Let $k_{1},k_{2},k_{3},$ and $k_{4}$ be positive integers. Cohen et al.\cite{Cohen} proved that any strong digraph with no subdivisions of $C(1,1,1,1)$ has chromatic number less than 24 . El Mniny\cite{Darine} proved that for any $k$ positive integer, any digraph with a spanning out-tree with no subdivision of $C(k,1,1,1)$ has a chromatic number at most $8^{3}k$. In this paper, we are going to improve the bound established in \cite{Darine} by proving that if D is a digraph having a spanning out-tree T with no subdivisions of C(k, 1, 1, 1), then the chromatic number of D is at most $18k$.\\
\- \- In our proof, we used for the first time in such investigations what we call a wheel in order to study the chromatic number of the digraphs. A wheel is a graph made up of a chordless cycle and a vertex adjacent to at least three vertices of the cycle, such vertex is called universal. In \cite{Thomassen}, Thomassen et al. proved that every graph with no wheel as a subgraph is 3-colorable. This result allowed us to get lower bounds for the chromatic number of the studied digraph, $D$, by proving that some subdigraphs of $D$ contain no wheels.
 
 \section{Main Result}
\- \- This section is devoted to prove our main result dealing with the existence of four blocks cycles in digraphs having a spanning out-tree.\\
\- \-Let $k$ be a positive integer and let $D$ be a digraph with a spanning out-tree. $D$ admits a maximal out-tree, say $T$.\\
\- \-We used the same partition of $D$ introduced by El Mniny\cite{Darine}. However we found less bounds for the chromatic number of each part, sometimes by using cycles and sometimes by using the wheels that facilitated our study of such digraph.\\
\- \-For $i=0,..,k-1$, let $V_{i}:= \cup_{\alpha\geq 0}L_{i+\alpha k}(T)$. Define $D_{i}$ to be the subdigraph of $D$ induced by $V_{i}$, and then partition the arcs of $D_{i}$ as follows:\\
\- \- \- \- $A_{1}:= \{(x, y)| x \leq_{T} y\}$;\\
\- \- \- \- \- $A_{2}:= \{(x, y)| y \leq_{T} x\}$;\\
\- \- \- \- \- $A_{3}:= A(D_{i}) \backslash (A_{1} \cup A_{2})$.\\
\- \-For $0 \leq i \leq k-1$ and $j = 1, 2, 3$, let $D_{i}^{j}$ be the spanning subdigraph of $D_{i}$ whose arc-set is $A_{j}$.\\

Let  $C$ be a cycle of $D$ of $s$ blocks, say $B_{1},\cdots,B_{s}$ with $s\geq 4$. We will use the same notations for the ends of the blocks of $C$ as introduced in the introduction such that the $x_{i}$'s (resp. $y_{i}$'s) are the sources (resp. sinks) of the blocks of $C$. Without loss of generality, we will suppose that $x_{1}$ is minimal in $\{x_{i}\}_{1\leq i\leq s/2}$ for $\leq_{T}$. Let $0\leq i \leq k-1$, $C$ is said to be a mixed cycle if all its blocks are induced by arcs in $D_{i}^{1}$ except for the block $B_{1}$ that contains a vertex $z_{1} \neq y_{1}$ such that the arcs of $C_{[x_{1},z_{1}]}$ belong to $T$ and the arcs of $C_{[z_{1},y_{1}]}$ belong to $D_{i}^{1}$. $B_{1}$ is called the mixed block of $C$.\\
Remark that $x_{1}$ is the smallest for $\leq_{T}$ in $C$.
\begin{Lemma}
If $D$ contains a mixed cycle then it contains a subdivision of $C(k,1,1,1)$.
  \end{Lemma}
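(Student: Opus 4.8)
The plan is to reduce the statement to the case of a cycle with exactly four blocks, where it is essentially immediate, and to reach that case by repeatedly ``smoothing'' a peak of the cycle with a tree path.

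\emph{Basic facts.} Write $C$ for the mixed cycle, with blocks $B_1,\dots,B_s$ (so $s=2m\ge 4$), $B_1$ the mixed block, $C_{[x_1,z_1]}\subseteq T$ and $C_{[z_1,y_1]}\subseteq D_i^1$. I would first record the facts used throughout: every block of $C$ is $\leq_{T}$-increasing (an $A_1$-arc goes from an ancestor to a descendant, and $B_1$ is a tree path followed by an $A_1$-path); every block-endpoint of $C$ is incident to an $A_1$-arc and hence lies in $V(D_i)$, so has level $\equiv i\pmod k$; consequently, for any two block-endpoints $a\leq_{T}b$ of $C$, the quantity $l_{T}(b)-l_{T}(a)$ is a nonnegative multiple of $k$, so $T_{[a,b]}$ has length $\ge k$ whenever $a\ne b$. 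In particular $|T_{[x_1,z_1]}|\ge k$, and hence $|B_1|\ge k$. Also $x_1\leq_{T}v$ for every $v\in V(C)$ (the stated remark). Finally, since consecutive blocks of $C$ meet in a single vertex, and $x_2\in V(B_2)\setminus\{y_1\}$ while $V(T_{[x_1,z_1]})\subseteq V(B_1)$, we get $x_2\notin V(B_1)$; as $x_2,z_1\leq_{T}y_1$ are therefore comparable, this forces $z_1<_{T}x_2\leq_{T}y_1$ (and, more generally, $x_j\notin V(B_1)$ for all $j\ne 1$).

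\emph{Reduction to four blocks.} Any oriented cycle in $D$ with exactly four blocks, one of which has length $\ge k$, is already a subdivision of $C(k,1,1,1)$: use that block as the subdivided $k$-block and the other three as the subdivided unit blocks. So it suffices to exhibit in $D$ a four-block oriented cycle with a block of length $\ge k$. If $m=2$, the cycle $C$ itself works, since $|B_1|\ge k$. So from now on $m\ge 3$.

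\emph{Shrinking the cycle.} Among all oriented cycles $\mathcal C$ in $D$ whose blocks are all $\leq_{T}$-increasing, whose block-endpoints all lie in $V(D_i)$, and which have a block of length $\ge k$, choose one, $\mathcal C$, with the fewest blocks ($C$ itself is such a cycle, so this family is nonempty). If $\mathcal C$ has four blocks we are done. Otherwise it has $s'\ge 6$ blocks; let $y$ be a sink of $\mathcal C$ of minimum level and $x\leq_{T}x'$ the sources of the two blocks of $\mathcal C$ incident to $y$ (they are comparable, both being $\leq_{T}y$). Put $T^{\ast}=T_{[x,x']}$, a $\leq_{T}$-increasing directed path with $|T^{\ast}|\ge k$, all of whose vertices have level $\le l_{T}(x')\le l_{T}(y)-k$ and hence lie strictly below every sink of $\mathcal C$. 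Deleting the two blocks at $y$ leaves a directed path $P^{\ast}$ from $x$ to $x'$ using the other $s'-2$ blocks. If $T^{\ast}$ is internally disjoint from $P^{\ast}$, then $P^{\ast}\cup T^{\ast}$ is again a cycle of this kind, with $s'-2$ blocks and a block of length $\ge k$ (either one inherited from $\mathcal C$, or the one containing $T^{\ast}$), contradicting minimality. Otherwise $T^{\ast}$ meets $P^{\ast}$ in a vertex $v$ with $x<_{T}v<_{T}x'$; since the level of $v$ is too small, $v$ is not a sink of $\mathcal C$, and splicing a suitable initial piece of $T^{\ast}$ into $\mathcal C$ at $v$ — using that $T_{[x,v]}$ is $\leq_{T}$-increasing of length a positive multiple of $k$ — again yields a cycle of the same kind with strictly fewer blocks, contradicting minimality. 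Hence $\mathcal C$ has four blocks, which proves the lemma.

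\emph{Main obstacle.} The delicate point is exactly the internal-disjointness analysis in the last step. Because the minimal vertex of such a cycle lies $\leq_{T}$-below everything, every tree path issued from it passes through all the other sources, so the only ``long'' segment one can hope to splice in safely is a tree path between two \emph{low} block-endpoints; choosing $y$ of minimum level keeps that segment strictly below all sinks, and the inequality $z_1<_{T}x_j$ keeps it clear of the tree prefix of the mixed block. What remains — ruling out the intersections with the other sources and with block interiors, and carefully tracking which block still has length $\ge k$ after each reduction — is where the real work lies, and is why the argument is best organised around a minimal cycle rather than a single explicit construction.
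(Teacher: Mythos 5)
Your reduction of the lemma to ``exhibit a four-block oriented cycle with one block of length at least $k$'' is sound, and the disjoint case of your shrinking step works, but the proof has a genuine gap exactly at what you yourself call the main obstacle: the intersection case is not merely postponed routine work, and the specific claims you make about it are unjustified or false. First, the intersection vertex $v$ of $T^{\ast}=T_{[x,x']}$ with $P^{\ast}$ need not lie in $V(D_i)$: your family only constrains the block \emph{endpoints}, while already in the original mixed cycle the interior of the tree part of $B_1$, and after one reduction the interior of any spliced tree path, lies outside $V(D_i)$; so ``$T_{[x,v]}$ has length a positive multiple of $k$'' has no basis. Second, splicing $T_{[x,v]}$ into $\mathcal{C}$ at $v$ need not produce a member of your family with strictly fewer (but still at least four) blocks. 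Since the block of $P^{\ast}$ leaving $x$ is $\leq_T$-increasing from $x$, its vertices lie on a tree path through $x$, just like $T^{\ast}$, so $v$ can perfectly well be interior to that block; then the two possible splices give either a closed walk containing only two-block cycles (useless for $C(k,1,1,1)$ --- and note your family as defined even admits two-block cycles, so ``fewest blocks'' need not be four in the first place) or a cycle with the \emph{same} number of blocks. In that situation minimality is not contradicted and the induction stalls. This is precisely the difficulty the paper's proof is organised to sidestep: it keeps the ``mixed'' structure anchored at the globally $\leq_T$-minimal vertex $z_1$, splices $T_{[z_1,x_i]}$ to the second-lowest source in the induction step, and only asserts that the resulting union \emph{contains} a mixed cycle with fewer blocks, finishing the four-block base case by a case analysis on successive $\leq_T$-minimal vertices.

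There is also a second, smaller gap at the very start: $|B_1|\geq k$ (and hence your ``$m=2$ is immediate'') presupposes $x_1\neq z_1$. The definition of a mixed cycle as the paper uses it allows the tree part to be trivial; this is exactly how the paper deduces, right after the lemma, that any cycle with at least four blocks lying entirely in $D_i^{1}$ forces a subdivision of $C(k,1,1,1)$, a consequence that Claim 1 of the Theorem relies on. The bulk of the paper's base case (the ``Else'' branch with the minimal vertices $z$ and $z'$) is devoted to that degenerate situation, where no block of $C$ need have length $\geq k$ and your argument gives nothing.
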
          
  \begin{proof} Let $C$ be a mixed cycle of $D$. We will proceed by induction on the number of blocks of $C$, $s$.\\ 
  \-  For $s=4$, if $l(C[x_{1},z_{1}])\geq k$ then $C$ is a subdivision of $C(k,1,1,1)$. Else let $z$ be the minimal for $\leq_{T}$ in $C-C[x_{1},z_{1}]$. We will study three cases depending on the position of $z$ on $C$. In deed, if $z\in C_{]x_{1},y_{1}[}$ then we replace in $C$, $C_{]z_{1},z[}$ by $T_{]z_{1},z[}$ and get a subdivision of $C(k,1,1,1)$. Else if $z\in C_{]x_{1},y_{2}[}$ then $C_{[z_{1},y_{1}]}$ $\cup$ $C_{[x_{2},y_{1}]}$ $\cup$ $C_{[x_{2},y_{2}]}$ $\cup$ $T_{[z_{1},z]}$ $\cup$ $ C_{[z,y_{2}]}$ is a subdivision of $C(k,1,1,1)$. Finally if $z=x_{2}$, in this case we will introduce $z'$ to be a minimal for $\leq_{T}$ in $C-C_{[x_{1},z_{1}]}-\{x_{2}\}$. If $z'$ $\in$ $C_{[x_{2},y_{1}]}$ or $C_{[x_{2},y_{2}]}$ then replace in $C$, $C_{[x_{2},z']}$ by $T_{[x_{2},z']}$ and get a subdivision of $C(k,1,1,1)$. Else if $z'$ $\in$ $C_{[x_{1},y_{2}[}$ then replace in $C$, $C_{]x_{2},y_{2}[}$ $\cup$ $C_{]z',y_{2}[}$ by  $T_{[x_{2},z']}$ and get a subdivision of $C(k,1,1,1)$. The only case left to study is if $z'$ $\in$ $C_{]z_{1},y_{1}[}$, then replace in $C$, $C_{]x_{2},y_{1}[}$ $\cup$ $C_{]z',y_{1}[}$ by $T_{[x_{2},z']}$ and get a subdivision of $C(k,1,1,1)$.\\ 
  \- \- Suppose it is true up to $s-2$ and let's prove it for $s$, $s\geq6$. Notice that $z_{1} \leq_{T} x_{i}$ for all $1\leq i \leq \frac{s}{2}$. Let $i\neq1$ be the integer such that $l_{T}(x_{i})$ is minimal. If $i\geq3$, then $C_{[z_{1},y_{1}]}$ $\cup$ $C_{[x_{2},y_{1}]}$ $\cup$.... $\cup$ $C_{[x_{i},y_{i-1}]}$ $\cup T_{[z_{1},x_{i}]}$ contains a mixed cycle of $D$ of blocks less than or equal to $s-2$. Otherwise $i=2$, we can get a mixed cycle of $D$ of blocks less than or equal to $s-2$ by replacing in $C$, $C_{]z_{1},y_{1}]}$ $\cup$ $C_{]x_{2},y_{1}]}$ by $T_{[z_{1},x_{2}]}$. In both cases, using the induction hypothesis, we get that $D$ contains a subdivision of $C(k,1,1,1)$.
    \end{proof}        
  Consequently, if $D$ contains a cycle $C$ of $s$ blocks, $s\geq4$, whose all arcs are in $D_{i}^{1}$, $0\leq i \leq k-1$, then $D$ contains a subdivision of $C(k,1,1,1)$.\\
  
  Let $C$ be a cycle of $D$ of $s$ blocks, say $B_{1},\cdots,B_{s}$. We will use the same notations for the ends of the blocks of $C$ as introduced in the introduction such that the $y_{i}$'s (resp. $x_{i}$'s) are the sources (resp. sinks) of the blocks of $C$. Let $0\leq i \leq k-1$, $C$ is said to be back-mixed if $s\geq 6$ and all its blocks are induced by arcs in $D_{i}^{2}$ except for $B_{s}$ that contains a vertex $z_{1}$ such that the arcs of $C_{[z_{1},x_{1}]}$ belong to $T$, the arcs of $C_{[y_{\frac{s}{2}},z_{1}]}$ belong to $D_{i}^{2}$, $z_{1}\neq y_{\frac{s}{2}}$, and $z_{1}$ is a minimal in C for $\leq_{T}$. $B_{s}$ is called the back-mixed block of $C$.\\
   Remark that $x_{1}$ is the smallest for $\leq_{T}$ in $C-C_{[z_{1},x_{1}]}$.
           
  \begin{Lemma}
        If $D$ contains a back-mixed cycle then it contains a subdivision of $C(k,1,1,1)$.
   \end{Lemma}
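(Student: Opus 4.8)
The plan is to follow the proof of Lemma~2.1 closely, using the ``arc-reversal'' symmetry that exchanges $D_i^1$ and $D_i^2$. I would argue by induction on the number $s$ of blocks of the back-mixed cycle $C$, with base case $s=6$ and inductive step $s\geq 8$. Before starting I would record the analogue of the observation ``$z_1\leq_T x_i$ for all $i$'' used in Lemma~2.1: along each block of $C$ the sink is an ancestor of the source (this is what ``induced by arcs of $D_i^2$'' means, and the back-mixed block moreover gives $z_1\leq_T y_{\frac{s}{2}}$ and $z_1\leq_T x_1$), while any two vertices with a common descendant are $\leq_T$-comparable; propagating these comparabilities around $C$ and invoking the minimality of $z_1$ forces $z_1\leq_T v$ for every $v\in V(C)$, whence also the Remark. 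The one fact I will use repeatedly is: if $u\neq w$ are vertices of $C$ lying in $V_i$ with $u\leq_T w$, then $T_{[u,w]}$ has length $l_T(w)-l_T(u)$, a positive multiple of $k$, hence at least $k$. So any reroute that slides such a tree path inside a block creates a block of length $\geq k$, available to be the long block of a subdivision of $C(k,1,1,1)$.

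For the base case $s=6$, write $C$ schematically as $x_1\leftarrow y_1\to x_2\leftarrow y_2\to x_3\leftarrow y_3\to x_1$, with $B_6=B_s$ the back-mixed block that descends (in $D_i^2$) from $y_3$ to $z_1$ and then ascends (in $T$) to $x_1$. The sinks $x_2$ and $x_3$ are both ancestors of $y_2$, hence $\leq_T$-comparable and distinct, so the tree path $P$ between them has length $\geq k$. Deleting from $C$ the sub-path $B_3\cup B_4$ joining $x_2$ to $x_3$ and inserting $P$ in its place yields a cycle with exactly four blocks, one of which contains $P$ and so has length $\geq k$: a subdivision of $C(k,1,1,1)$. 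When this insertion does not produce a genuine cycle, because $P$ meets $C$ elsewhere, I would fall back to a position analysis: take $z$ to be a $\leq_T$-minimal vertex of $C-C_{[z_1,x_1]}$; since the level strictly drops along each arc of a $D_i^2$-block — so that an internal vertex is $\leq_T$-above its successor on that block — $z$ is the source of no block and the only block-internal vertex it can be is the one of $B_s$'s $D_i^2$-part just before $z_1$; thus $z$ is a sink $x_j$ ($j\neq1$) or that vertex, and in each case a tree path from $z_1$ or from $x_1$ to $z$ (once more of length $\geq k$) reroutes $C$, exactly along the lines of the three positional sub-cases — and the nested ``$z=x_2$, introduce $z'$'' branch — of the $s=4$ base case of Lemma~2.1.

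For the inductive step $s\geq 8$, I would replace in $C$ the sub-path $B_1\cup B_2$ joining $x_1$ to $x_2$ by the tree path $T_{[x_1,x_2]}$, which exists since $x_1\leq_T x_2$ (by the Remark) and has length $\geq k$. Because at $x_1$ the last arc of the old back-mixed block $B_s$ enters $x_1$ while the first arc of $T_{[x_1,x_2]}$ leaves it, $B_s$ and $T_{[x_1,x_2]}$ merge into one block descending from $y_{\frac{s}{2}}$ to $z_1$ and then ascending through $x_1$ to $x_2$, with $D_i^2$-part $C_{[y_{\frac{s}{2}},z_1]}$ and $T$-part $C_{[z_1,x_1]}\cup T_{[x_1,x_2]}$; the blocks $B_3,\dots,B_{s-1}$ are untouched and still lie in $D_i^2$. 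Hence the new cycle is again back-mixed, its distinguished vertex is still $z_1$ and still $\leq_T$-minimal (each newly inserted vertex lies strictly above $z_1$), and it has $s-2\geq 6$ blocks, so the induction hypothesis applies. (If a reroute ever leaves only four blocks instead of at least six, those four already include one of length $\geq k$, so that cycle is itself a subdivision of $C(k,1,1,1)$; the outcome to avoid is a two-block cycle, which is why $j$ is chosen so that $s-2\geq 6$.)

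The hard part will be the positional case analysis in the base case: certifying that a rerouted closed walk really contains a subdivision of $C(k,1,1,1)$ and does not collapse through an unforeseen intersection of the inserted tree path with $C$ — this is what forces the nested sub-cases, just as in the $s=4$ base case of Lemma~2.1. A less conceptual but still indispensable task is the bookkeeping in the inductive step: checking, at each reduction, that the distinguished vertex of the new cycle is genuinely $\leq_T$-minimal, that the new back-mixed block keeps the shape ``in $D_i^2$ down to the bottom, then in $T$ up to the terminal sink'', and that the block count always stays inside $\{4\}\cup\{6,8,\ldots\}$, so the recursion never slips below the threshold $s\geq6$ below which back-mixed cycles are undefined.
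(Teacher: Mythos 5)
Your global strategy (induction on the number of blocks, rerouting along tree paths between two distinct comparable vertices of $V_i$, which automatically have length at least $k$) is the same as the paper's, but there is a genuine gap exactly at the point you yourself flag as ``the hard part'', and it is not mere bookkeeping: you reroute to \emph{fixed} sinks, and nothing prevents the inserted tree path from meeting $C$ again. In the inductive step this breaks the argument as stated: you always replace $B_1\cup B_2$ by $T_{[x_1,x_2]}$, but it may happen that $x_3<_T x_2$ (the vertices $x_2,x_3$ are comparable, having the common descendant $y_2$, and the Remark only makes $x_1$ smallest); then $x_1<_T x_3<_T x_2$, so $x_3$ is an interior vertex of $T_{[x_1,x_2]}$, the rerouted closed walk repeats $x_3$, is not a cycle, in particular not a back-mixed cycle, and the induction hypothesis cannot be invoked. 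Your parenthetical safeguard only addresses a drop in the block count, not vertex repetition. This is precisely what the paper's device ``let $i\neq 1$ be such that $l_T(x_i)$ is minimal'' is for: rerouting to the minimal-level sink forces any intersection vertex $u$ of the inserted tree path with a block of sink $x_j$, $j\neq 1$, to satisfy $x_j\le_T u<_T x_i$, hence $l_T(x_j)<l_T(x_i)$, a contradiction (and the paper treats the case $i=3$ separately, producing a subdivision directly rather than a smaller back-mixed cycle). The same defect sits in your base case: your main reroute (delete $B_3\cup B_4$, insert the tree path between $x_2$ and $x_3$) may pass through further vertices of $C$ (e.g. interior vertices of $B_1$, $B_2$, or of the $D_i^2$-part of the back-mixed block), and your fallback --- take $z$ minimal in $C-C_{[z_1,x_1]}$ and argue ``along the lines of'' the $s=4$ case of Lemma~2.1 --- is an analogy, not a proof: the back-mixed configuration is not simply the arc-reversal of the mixed one (here the distinguished vertex $z_1$ sits at the bottom of the tree order and the tree part ascends into a sink), and indeed the paper's own $s=6$ case is not a mirror of Lemma~2.1's $s=4$ case but a separate two-case analysis again governed by the minimal-level sink, using $T_{[x_1,x_2]}$ or $T_{[x_1,x_3]}$. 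Until that positional analysis is actually carried out, or the reroutes are redone at the minimal-level sink so that disjointness can be checked, the proof is incomplete.

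A smaller point: your justification of the Remark does not go through as written. From ``$z_1\le_T v$ for all $v\in V(C)$'' alone one cannot conclude $x_1\le_T x_i$; one also needs that a vertex of $C-C_{[z_1,x_1]}$ lying strictly between $z_1$ and $x_1$ for $\le_T$ would have to lie on the tree path $C_{[z_1,x_1]}$, contradicting the fact that $C$ is a cycle, and then to propagate comparability along consecutive sinks via their common sources. This is easy to repair, but the phrase ``whence also the Remark'' is currently unsupported.
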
  
          
    \begin{proof}Let $C$ be a back-mixed cycle of $D$. We will proceed by induction on the number of blocks of $C$, $s$.
    Notice that $x_{1}\leq_{T}x_{i}$ for all $1\leq i \leq \frac{s}{2}$.\\
   \- \- \-For $s=6$.  Let $i\neq1$ be the integer such that $l_{T}(x_{i})$ is minimal. If $i=2$, then $C_{[y_{2},x_{2}]}$ $\cup$ $C_{[y_{2},x_{3}]}$ $\cup$  $C_{[y_{3},x_{3}]}$ $\cup$ $C_{[y_{3},x_{1}]}$ $\cup$ $T_{[x_{1},x_{2}]}$ contains a subdivision of $C(k,1,1,1)$. Else $i=3$, then $C_{[y_{1},x_{1}]}$ $\cup$ $C_{[y_{1},x_{2}]}$ $\cup$ $C_{[y_{2},x_{2}]}$ $\cup$ $C_{[y_{2},x_{3}]}$ $\cup T_{[x_{1},x_{3}]}$ contains a subdivision of $C(k,1,1,1)$.\\
   \- \- \- Suppose it is true up to $s-2$, and let's prove it for $s$, $s\geq8$. Let $i\neq 1$ be the integer such that $l_{T}(x_{i})$ is minimal. If $i>3$, then $C_{[y_{1},x_{1}]}$ $\cup$ $C_{[y_{1},x_{2}]}$ $\cup\cdots \cup$ $C_{[y_{i-1},x_{i}]}$ $\cup$ $T_{[x_{1},x_{i}]}$ contains a back-mixed cycle of $D$ with blocks less than or equal to $s-2$, and so $D$ contains a subdivision of $C(k,1,1,1)$. Else if $i=2$, then replace in $C$, $C_{]y_{1},x_{1}[}$ $\cup$ $C_{]y_{1},x_{2}[}$ by $T_{[x_{1},x_{2}]}$, this contains a back-mixed cycle of $D$ of blocks less than or equal $s-2$, and so $D$ contains a subdivision of $C(k,1,1,1)$. Finally if $i=3$, notice that $C_{[y_{1},x_{1}]}$ $\cup$ $C_{[y_{1},x_{2}]}$ $\cup$ $ C_{[y_{2},x_{2}]}\; \cup$ $C_{[y_{2},x_{3}]}$ $\cup$ $T_{[x_{1},x_{3}]}$ contains a subdivision of $C(k,1,1,1)$.
 \end{proof}
  Consequently, if $D$ contains a cycle $C$ of $s$ blocks, $s\geq6$, whose all arcs are in $D_{i}^{2}$, $0\leq i \leq k-1$, then $D$ contains a subdivision of $C(k,1,1,1)$.\\
  
  Let $C$ be a cycle of $D$ of $4$ blocks, say $B_{1},\cdots,B_{4}$. We will use the same notations for the ends of the blocks of $C$ as introduced in the introduction such that the $y_{i}$'s (resp. $x_{i}$'s) are the sources (resp. sinks) of the blocks of $C$. $C$ is said to be a bad 4-blocks cycle if it is either a $C(1,1,1,1)$ ( named bad of type 1) or a $C(2,1,1,1)$ such that $B_{1}=(y_{1},z_{1}) \cup (z_{1},x_{1})$ and $x_{1}\leq_{T} x_{2}\leq_{T} z_{1}\leq_{T}y_{2}\leq_{T} y_{1}$ ( named bad of type 2). Else $C$ is said to be a good 4-blocks cycle.
  \begin{Lemma} Let $C$ be a cycle with 4-blocks in $D_{i}^{2}$, $0\leq i \leq k-1$. If $D$ contains no subdivision of $C(k,1,1,1)$, then $C$ is a bad cycle.  \end{Lemma}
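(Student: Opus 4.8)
I argue by contradiction: suppose $C$ is a \emph{good} $4$-blocks cycle in $D_{i}^{2}$ while $D$ contains no subdivision of $C(k,1,1,1)$, and build such a subdivision using only arcs of $C$ and of the maximal out-tree $T$. Write $C$ as $y_{1}\xrightarrow{B_{1}}x_{1}\xleftarrow{B_{4}}y_{2}\xrightarrow{B_{3}}x_{2}\xleftarrow{B_{2}}y_{1}$, with sources $y_{1},y_{2}$ and sinks $x_{1},x_{2}$. Every arc of $D_{i}^{2}$ goes from a vertex to a proper $\leq_{T}$-ancestor whose level is congruent to it modulo $k$, so each such arc drops the $T$-level by at least $k$; hence the vertex set of each block $B_{j}$ is a $\leq_{T}$-chain from its sink up to its source, each sink is a $\leq_{T}$-ancestor of each source it meets, and the source of $B_{j}$ lies at least $k\cdot l(B_{j})$ levels above its sink. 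Since $x_{1},x_{2}$ are common ancestors of $y_{1}$ they are $\leq_{T}$-comparable; applying the rotational symmetry of a $4$-blocks cycle (which swaps $(x_{1},y_{1})$ with $(x_{2},y_{2})$) I may assume $x_{1}<_{T}x_{2}$, whence $x_{1}<_{T}x_{2}<_{T}y_{1}$, $x_{1}<_{T}x_{2}<_{T}y_{2}$, and $x_{1}\neq x_{2}$, $y_{1}\neq y_{2}$. A crucial structural consequence: if $y_{1},y_{2}$ are $\leq_{T}$-comparable then all vertices of $C$ lie on a single directed path of $T$, and if they are incomparable they all lie on the ``$Y$'' $T_{[x_{1},w]}\cup T_{[w,y_{1}]}\cup T_{[w,y_{2}]}$ where $w=\mathrm{lca}(y_{1},y_{2})$ satisfies $x_{2}\leq_{T}w$.

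First, if some block $B_{j}$ has length $\geq k$ then $C$ is itself a $4$-blocks oriented cycle with a block of length $\geq k$, hence a subdivision of $C(k,1,1,1)$ -- contradiction; so every block has length $\leq k-1$. On the other hand each of $T_{[x_{1},x_{2}]}$, $T_{[x_{2},y_{1}]}$, $T_{[x_{2},y_{2}]}$ (and, in the incomparable case, $T_{[w,y_{1}]}$, $T_{[w,y_{2}]}$) is a directed path of length $\geq k$. The plan is thus to splice such a tree segment into $C$ as the long block of a new $4$-blocks cycle, closing it up by three internally vertex-disjoint directed paths assembled from arcs of $C$ and $T$.

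I would then split on whether $y_{1},y_{2}$ are $\leq_{T}$-comparable. In the comparable case one may take $x_{1}<_{T}x_{2}<_{T}y_{1}<_{T}y_{2}$ (reflection symmetry) and list the vertices of $C$ along the tree path as $x_{1}=c_{0}<_{T}c_{1}<_{T}\dots<_{T}c_{N}=y_{2}$, every consecutive segment $T_{[c_{t},c_{t+1}]}$ having length $\geq k$; here $N=3$ exactly when $C=C(1,1,1,1)$ and $N=4$ exactly when $C=C(2,1,1,1)$. The heart of the argument is to show that when $C$ is good one can pick four of the $c_{t}$ as the corners of a new $4$-blocks cycle whose long block is a full segment $T_{[c_{s},c_{t}]}$ and whose other three blocks are: an arc of $C$ from one source to a sink, an arc of $C$ from the other source to that sink, and a short path (an arc of $C$ followed by a tree segment) from the second source to $c_{t}$; goodness provides a ``spare'' marked vertex and enough room on the line for this routing to avoid repeating a vertex. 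In the incomparable case one argues in the same spirit, using the fork at $w$ to route the second source into the newly created sink. In all cases this yields a subdivision of $C(k,1,1,1)$ in $D$, the desired contradiction, so $C$ must be bad.

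The delicate part -- and the main obstacle -- is the bookkeeping in the last step: one must verify that for every $4$-blocks cycle in $D_{i}^{2}$ other than $C(1,1,1,1)$ and the prescribed $C(2,1,1,1)$ with $x_{1}\leq_{T}x_{2}\leq_{T}z_{1}\leq_{T}y_{2}\leq_{T}y_{1}$ there is genuinely enough slack to insert a length-$\geq k$ tree segment with the four resulting directed paths internally vertex-disjoint. The reason the two bad configurations resist is concrete: the outgoing arcs of the sources land too far down the tree line (in bad type $2$, both out-arcs of the highest source point at the two marked vertices nearest $x_{1}$), so that any attempt to free a long tree segment forces one of those landing vertices, or the spine between two of them, to be traversed twice. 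A clean way to carry out the verification is to fix, for each of the finitely many admissible choices of corner set, exactly which directed paths are available between consecutive corners using arcs of $C$ and $T$, and check that a feasible choice containing a block of length $\geq k$ exists in every case except the two bad ones.
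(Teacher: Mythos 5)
Your setup is sound and matches the framework the paper uses implicitly: every arc of $D_{i}^{2}$ goes from a vertex to a proper ancestor with level congruent modulo $k$, so each block of $C$ is a $\leq_{T}$-chain, any two distinct comparable vertices of $C$ are joined in $T$ by a directed path of length at least $k$, the sinks $x_{1},x_{2}$ are comparable, all blocks of $C$ have length at most $k-1$ (else $C$ is already a subdivision of $C(k,1,1,1)$), and the vertices of $C$ lie on a tree path or on a ``Y''. Your intended strategy --- splice a long tree segment $T_{[\cdot,\cdot]}$ into $C$ and reroute the rest of $C$ (plus short tree segments) into three internally disjoint blocks --- is exactly the strategy of the paper's proof.

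However, there is a genuine gap: the entire content of the lemma is the verification you explicitly defer. You write that ``goodness provides a `spare' marked vertex and enough room on the line for this routing to avoid repeating a vertex,'' and that ``the delicate part --- and the main obstacle --- is the bookkeeping,'' proposing to check feasibility ``for each of the finitely many admissible choices of corner set'' --- but none of these cases is actually carried out, in either the comparable or the incomparable configuration (for the latter you only say one ``argues in the same spirit''). This is not routine bookkeeping that can be waved through: the statement is sharp, since two specific configurations ($C(1,1,1,1)$ and the particular $C(2,1,1,1)$ with $x_{1}\leq_{T}x_{2}\leq_{T}z_{1}\leq_{T}y_{2}\leq_{T}y_{1}$) genuinely admit no such splicing, so any correct argument must exhibit, configuration by configuration, a concrete subdivision of $C(k,1,1,1)$ and explain why the construction fails precisely in the bad cases. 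The paper does this by introducing successive extremal vertices $z$, $z'$, $z''$, $z'''$ (minimal or maximal for $\leq_{T}$ in shrinking subsets of $V(C)$) and, in each positional case, naming the exact replacement (e.g.\ $C_{[z',x_{1}]}\cup T_{[z',z'']}\cup C_{[y_{2},z'']}\cup C_{[y_{2},x_{1}]}$, etc.), with the two bad types emerging as the terminal cases where every replacement is blocked. Until you perform an analysis of this kind (your chain normalization $x_{1}=c_{0}<_{T}\cdots<_{T}c_{N}$ is a reasonable vehicle for it, provided you also track the internal vertices of blocks and the incomparable case), the proposal is a plan rather than a proof, and your heuristic explanation of why the two bad cycles resist is an assertion of the lemma's content, not an argument for it.
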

   \begin{proof}
    Suppose to the contrary that $C$ is a good 4-blocks cycle.
    In deed, consider without loss of generality $x_{1}$ to be a minimal in $C$ for $\leq_{T}$. Let $z$ minimal in $C$-$\{x_{1}\}$ for $\leq_{T}$. If $z\in C_{]y_{i},x_{1}[}$ for $i=1 $ or 2 then replace in $C$, $(z,x_{1})$ by $T_{[x_{1},z]}$ and get a subdivision of $C(k,1,1,1)$, a contradiction. Then $z=x_{2}$. Let $z'$ be a minimal in $C-\{x_{1},x_{2}\}$ for $\leq_{T}$. If $z'\in C_{]y_{i},x_{2}[}$ for $i =1 $ or $2$ then replace $(z',x_{2})$ by $T_{[x_{2},z']}$ and get a subdivision of $C(k,1,1,1)$, a contradiction. Notice also that $y_{1}$ and $y_{2}$ can not be both minimal in $C-\{x_{1},x_{2}\}$ for $\leq_{T}$, since else $C$ is bad of type 1, a contradiction. Moreover if without loss of generality $y_{1}$ is minimal in $C$-$\{x_{1},x_{2}\}$ for $\leq_{T}$, then let $z''$ be a maximal in $C-\{y_{2}\}$ for $\leq_{T}$ such that $z''\leq_{T} y_{2}$. If $z''\in C_{]y_{2},x_{i}[}$ for $i=1$ or $2$ then replace in C, $(y_{2},z'')$  by $T_{[z'',y_{2}]}$ and get a subdivision of $C(k,1,1,1)$, a contradiction. Then $z''=y_{1}$ or $z''=x_{2}$ and so $C$ is bad of type 1, a contradiction. Hence $z' \notin C_{[y_{i},x_{2}[}$ for $i =1,2$. If without loss of generality $z'\in C_{]y_{1},x_{1}[}$, let $z''$ be a minimal in $C-\{x_{1},x_{2},z'\} $ for $\leq_{T}$. If $z''=y_{1}$ then replace in $C$, $(y_{1},z')$ by $T_{[z',y_{1}]}$ and get a subdivision of $C(k,1,1,1)$, a contradiction. Else if $z''\in C_{]y_{i},x_{2}[}$ for $i=1,2$ then $C_{[z',x_{1}]} \cup T_{[z',z'']} \cup C_{[y_{2},z'']} \cup C_{[y_{2},x_{1}]} $ is a subdivision of $C(k,1,1,1)$, a contradiction.
    Else if $z''\in C_{]y_{2},x_{1}[}$ then replace in $C$, $C_{[z',x_{1}]} \cup C_{[z'', x_{1}]} $ by $T_{[z',z'']}$ and get a subdivision of $C(k,1,1,1)$, a contradiction. Else if $z''\in C_{]y_{1},z'[}$ then $C_{[z',x_{1}]} \cup T_{[z',z'']} \cup C_{[y_{1},z'']} \cup C_{[y_{1},x_{2}]} \cup T_{[x_{1},x_{2}]}$ is a subdivision of $C(k,1,1,1)$, a contradiction. Then $z''=y_{2}$ then let $z'''$ be a maximal in $C$-$\{y_{1}\}$ with $z'''\leq_{T} y_{1}$. Notice that $z'''$ can not be $y_{2}$ since else $C$ is bad of type 2, where $x_{1}\leq_{T} x_{2}\leq_{T} z' \leq_{T}y_{2}\leq_{T} y_{1}$, a contradiction. Hence $z'''\in C_{]y_{1},x_{2}[}$ or $C_{]y_{1},z'[}$ and so replace in $C$, $(y_{1},z''')$ by $T_{[z''',y_{1}]}$ and get a subdivision of $C(k,1,1,1)$, a contradiction. \\
  \end{proof}
  \begin{Lemma}
   If $D$ has no subdivision of $C(k,1,1,1)$ then $\chi(D_{i}^{3}) \leq 2$, for all $0\leq i \leq k-1$.
\end{Lemma}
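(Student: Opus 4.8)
The plan is to prove that the underlying graph $G$ of $D_i^3$ is bipartite; since $\chi(D_i^3)=\chi(G)$ and a graph has chromatic number at most $2$ exactly when it has no odd cycle (and at most $1$ if it has no edge), it suffices to show $G$ has no odd cycle. First I would record two structural facts coming from the maximality of $T$. Every arc of $A_3$ is forward with respect to $T$: if $(x,y)$ were backward then, $T$ being a maximal out-tree, there is a $yx$-directed path in $T$, so $y\le_T x$ and $(x,y)\in A_2$, not $A_3$. Consequently the level $l_T$ strictly increases along every block of a cycle of $D_i^3$. Moreover, if $(x,y)\in A_3$ then $x,y$ are incomparable and both lie in $V_i$, so their levels are distinct (levels are stable in a maximal out-tree) and congruent modulo $k$, whence $l_T(y)-l_T(x)\ge k$. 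This inequality is what manufactures the long block of a $C(k,1,1,1)$: for any $A_3$-arc $(x,y)$ with $z$ the least common ancestor of $x$ and $y$, the directed tree path $T_{[z,y]}$ has length $l_T(y)-l_T(z)\ge l_T(y)-l_T(x)\ge k$.

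Now suppose for contradiction that $G$ has an odd cycle, and let $C$ be one with the fewest vertices; then $C$ is chordless in $G$, since a chord would split $C$ into two shorter cycles one of which has odd length. For the base case $|C|=3$, the forwardness remark forces $C$ to be a transitive triangle $v_1\to v_2\to v_3$, $v_1\to v_3$ with $l_T(v_1)<l_T(v_2)<l_T(v_3)$ and $v_1,v_2,v_3$ pairwise incomparable. Let $w$ be the least common ancestor of $v_2$ and $v_3$. Then $T_{[w,v_3]}$ is a directed path of length $\ge l_T(v_3)-l_T(v_2)\ge k$, $T_{[w,v_2]}$ is a directed path of length $\ge 1$, these two paths meet only at $w$, and $v_1$ lies on neither of them (being incomparable to $v_2$ and to $v_3$) and differs from $w$. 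Hence $T_{[w,v_3]}\cup(v_1,v_3)\cup(v_1,v_2)\cup T_{[w,v_2]}$ is a cycle with exactly four blocks, with sources $w,v_1$ and sinks $v_2,v_3$, of lengths $\ge k,\,1,\,1,\,\ge 1$; that is, a subdivision of $C(k,1,1,1)$, contradicting the hypothesis.

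For the inductive step $|C|\ge 5$ I would imitate the surgery in the preceding lemmas, trading portions of $C$ for tree paths. Although $C$ has no chord in $G(D_i^3)$, it may have a chord in $D$; any such chord joins a $\le_T$-comparable pair $v_a<_T v_b$ of vertices of $C$, so $T_{[v_a,v_b]}$ is a directed path of length $\ge k$, and rerouting through it the one of the two $v_av_b$-arcs of $C$ whose parity makes the result odd produces either a strictly smaller odd cycle (contradicting the minimality of $C$) or a four-block cycle in which $T_{[v_a,v_b]}$ is a block of length $\ge k$, hence a subdivision of $C(k,1,1,1)$. If $C$ has no chord in $D$, I would instead take the maximum-level vertex $q$ of $C$ together with its two $C$-neighbours $u,u'$ (joined to $q$ by arcs $(u,q),(u',q)$), the least common ancestors $\mathrm{lca}(u,q),\mathrm{lca}(u',q)$, and the directed tree paths below them, and apply the block-reduction move of the preceding lemmas to the $u$–$u'$ sub-path of $C$: replacing a sub-path running between two vertices of minimal level by a directed tree path reduces the configuration to one settled by the base case or exhibits a subdivision of $C(k,1,1,1)$ outright. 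In every case the hypothesis is contradicted, so $G$ is bipartite and $\chi(D_i^3)\le 2$.

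The main obstacle is the inductive step: organising the surgery for odd cycles with at least five vertices so that each case either strictly shortens the cycle while keeping it odd, or directly yields a four-block cycle with one block of length at least $k$. The delicate bookkeeping is choosing which of the several least common ancestors to route through and verifying internal disjointness of the rerouted paths; the reduction to bipartiteness, the forwardness of $A_3$, and the triangle base case are routine with the tools already in place.
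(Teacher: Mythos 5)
Your reduction to bipartiteness, your two structural observations (every $A_{3}$-arc is forward because $T$ is maximal, and its endpoints lie in $V_{i}$ on distinct levels, so every $A_{3}$-arc spans at least $k$ levels), and your treatment of the triangle case are correct; in fact the level-gap observation is a clean way to manufacture the long block, and the paper uses the same idea implicitly when it invokes least common ancestors. But the lemma's real content is the case of odd cycles on at least five vertices, and that is exactly where your argument stops being a proof. Your chord case does not work as stated: after you replace one of the two $v_{a}v_{b}$-subpaths of $C$ by $T_{[v_{a},v_{b}]}$, the resulting closed walk uses tree arcs (which are not in $A_{3}$) and internal vertices that need not even lie in $V_{i}$, so it is not an odd cycle of $D_{i}^{3}$ and cannot contradict the minimality of $C$; moreover the retained portion of $C$ is an arbitrary oriented path and can contribute many blocks, so there is no reason the rerouted walk is a four-block cycle, and you never verify that $T_{[v_{a},v_{b}]}$ is internally disjoint from it. The chordless case is only a declaration of intent: "apply the block-reduction move of the preceding lemmas" does not transfer, because Lemmas 2.1--2.3 concern cycles whose arcs join $\leq_{T}$-comparable vertices ($D_{i}^{1}$, $D_{i}^{2}$), whereas in $D_{i}^{3}$ consecutive vertices are incomparable, so every surgery must pass through least common ancestors and the resulting object again leaves $D_{i}^{3}$, killing any induction on the same structure.

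This is precisely why the paper's proof of this lemma is not an induction at all but a direct case analysis on one odd cycle $x_{1}\cdots x_{t}$ with $x_{1}$ of minimal level: Case 1 when $x_{2}$ and $x_{t}$ are incomparable (forcing $l_{T}(x_{2})=l_{T}(x_{t})$, then showing $x_{2}\leq_{T}x_{t-1}$ and $x_{t}\leq_{T}x_{3}$ and building the subdivision from the two tree paths and the arcs $(x_{2},x_{3})$, $(x_{t},x_{t-1})$), and Case 2 when they are comparable, with sub-cases on the first index $i$ where the level increases ($i=3$, $i=t$, $4\leq i<t$), the exclusion of certain $P(1,2)$ configurations, and a final analysis around $x_{4}$, $x_{5}$, $x_{t}$ and two least common ancestors. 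None of this is reconstructed or replaced by a working alternative in your proposal, and you acknowledge as much when you call the inductive step the "main obstacle." As it stands the proposal proves the lemma only for triangles; the general case has a genuine gap.
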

  \begin{proof}
   We claim that the underlying graph of $D_{i}^{3}$ is bipartite. In deed, suppose to the contrary that $D_{i}^{3}$ contains an odd cycle $C=x_{1}...x_{t}$. Without loss of generality, suppose that $x_{1}$ is with minimal level for $T$ in $C$. Now we will study two cases:\\
  \-\- \textbf{Case 1:}
  Neither $x_{t}$ is ancestor of $x_{2}$ nor $x_{2}$ is ancestor of $x_{t}$.\\
  \- \- In this case $l_{T}(x_{t})=l_{T}(x_{2})$, since else let $y$ be their least common ancestor, so $T_{[y,x_{2}]} \cup T_{[y,x_{t}]} \cup (x_{1},x_{t}) \cup (x_{1},x_{2})$ is a subdivision of $C(k,1,1,1)$, a contradiction. Thus $t\geq5$. Similarly, we can see that both $T_{[y,x_{t}]}$ and $T_{[y,x_{2}]}$ have lengths less than $k$.\\
  \- \- Notice that $x_{2}$ is an ancestor of $x_{t-1}$. If not, let $z$ be the least common ancestor of $x_{2}$ and $x_{t-1}$. If $x_{1} \notin T_{[z,x_{t-1}]}$ then $T_{[z,x_{t-1}]}$ $\cup$ $T_{[z,x_{2}]}$ $\cup \;(x_{1},x_{2})$ $\cup \;(x_{1},x_{t})$ $\cup \; x_{t-1}x_{t}$ is a subdivision of $C(k,1,1,1)$, a contradiction. Else, $T_{[y,x_{t}]}$ $\cup$ $x_{t-1}x_{t}$ $\cup$ $T_{[y,x_{2}]}$ $\cup \;(x_{1},x_{2})$ $\cup$ $T_{[x_{1},x_{t-1}]}$ is subdivision of $C(k,1,1,1)$, a contradiction.\\
  \- \- As well, $x_{t}$ is an ancestor of $x_{3}$. Thus, $(x_{2},x_{3})$ and $(x_{t},x_{t-1})$ are arcs in $D_{i}^{3}$. Then $T_{[x_{t},x_{3}]} \cup T_{[x_{2},x_{t-1}]} \cup (x_{2},x_{3}) \cup (x_{t},x_{t-1})$ is a subdivision of $C(k,1,1,1)$, a contradiction.\\
  \-\- \textbf{Case 2:} Without loss of generality, suppose that $x_{2} \leq_{T} x_{t}$.\\
  Let $i$ the smallest integer greater than $2$ satisfying $l_{T}(x_{i}) > l_{T}(x_{i-1})$. We will study the following cases:\\
       \- \- \- If $i=3$. Consider the least common ancestor of $x_{1}$ and $x_{3}$, y, then $T_{[y,x_{1}]}$ $\cup(x_{1},x_{t})$  $\cup$ $T_{[y,x_{3}]}$ $\cup(x_{2},x_{3})  \cup T_{[x_{2},x_{t}]}$ is a subdivision of $C(k,1,1,1)$, a contradiction. Notice that $D_{i}^{3}$ has no path of type $P(1,2)$ satisfying the same properties of the $x_{t}x_{1}x_{2}x_{3}$ which is a path of type $P(1,2)$ with $x_{2} \leq_{T} x_{t}$, since else we can find similarly a subdivision of $C(k,1,1,1)$ in $D$, a contradiction.\\
       \- \- \- If $i=t$, let $z$ the least common ancestor of $x_{1}$ and $x_{t-2}$, then $T_{[z,x_{1}]} \cup (x_{1},x_{t}) \cup T_{[z,x_{t-2}]} \cup (x_{t-1},x_{t-2}) \cup (x_{t-1},x_{t})$ is a subdivision of $C(k,1,1,1)$, a contradiction. \\
       \- \- \-If $4\leq i<t$. Then $l_{T}(x_{i}) > l_{T}(x_{i-2})$, since else we can consider the least common ancestor of $x_{1}$ and $x_{i}$ to find a subdivision of $C(k,1,1,1)$, a contradiction. Hence  $l_{T}(x_{i}) > l_{T}(x_{i-2})$. Notice that $x_{i-2} \leq_{T} x_{i}$, since else let $y$ be their least common ancestor, and so $T_{[y,x_{i-2}]} \cup T_{[y,x_{i}]} \cup (x_{i-1},x_{i}) \cup (x_{i},x_{i-2})$ is a subdivision of $C(k,1,1,1)$, a contradiction. Note that here $i=4$, since else one can notice that $(x_{i-1},x_{i}) \cup (x_{i-1},x_{i-2}) \cup (x_{i-2},x_{i-3})$ is a path $P(1,2)$ as in case $i=3$, a contradiction. Notice that neither $x_{4}\leq_{T}x_{t}$ nor $x_{t}\leq_{T}x_{4}$, since else we can combine the directed path in $T$ between $x_{4}$ and $x_{t}$ and the oriented path in $C$ between $x_{4}$ and $x_{t}$ and find a subdivision of $C(k,1,1,1)$, a contradiction. Moreover, using the least common ancestor of $x_{1}$ and $x_{3}$, we can prove that $l_{T}(x_{4})=l_{T}(x_{t})$ and the length of the directed paths in $T$ from their least common ancestor to each is less than $k$. Now, using the least common ancestor of $x_{4}$ and $x_{t}$ we can show similarly that $l_{T}(x_{1})=l_{T}(x_{3})$. Notice that $t \geq 7$.\\
       Denote by $y$ the least common ancestor of $x_{5}$ and $x_{t}$. Notice here that neither $x_{1}$ nor $x_{3}$ is ancestor of $x_{t}$.\\
       Also $y\neq x_{t}$, since else we can find a subdivision of $C(k,1,1,1)$, a contradiction. Besides, $y\neq x_{5}$, since else we can use the least common ancestor of $x_{1}$ and $x_{3}$ to find a subdivision of $C(k,1,1,1)$, a contradiction.\\
        We claim also that neither $x_{1}$ nor $x_{3}$ is ancestor of $x_{5}$. In deed, if $x_{1} \leq_{T} x_{5}$, we'll consider the least common ancestor of $x_{4}$ and $x_{t}$ to find a subdivision of $C(k,1,1,1)$, a contradiction. Similarly, if $x_{3}\leq_{T} x_{5}$, we'll consider the least common ancestor of $x_{4}$ and $x_{t}$ and the least common ancestor of $x_{1}$ and $x_{3}$ to find a subdivision of $C(k,1,1,1)$, a contradiction. \\
      Hence denote by $z$ the least common ancestor of $x_{1}$ and $x_{3}$. If $y\notin T_{[z,x_{i}]}$, $i=1,3$, then $T_{[z,x_{3}]}\; \cup \; (x_{3},x_{4})\; \cup \; x_{4}x_{5} \; \cup \;  T_{[y,x_{5}]}\; \cup\; T_{[y,x_{t}]}\; \cup \; T_{[z,x_{1}]} \;\cup \; (x_{1},x_{t})$ is a subdivision of $C(k,1,1,1)$, a contradiction. Else, $y$ is in particular the least common ancestor of $x_{2}$ and $x_{5}$ and $l(T_{[y,x_{2}]}) \geq k$, then $ T_{[y,x_{2}]}\; \cup\; T_{[y,x_{5}]} \cup x_{5}x_{4} \cup (x_{3},x_{4}) \cup (x_{3},x_{2})$ is a subdivision of $C(k,1,1,1)$, a contradiction.\\
\- \- \- Hence $D_{i}^{3}$ contains no odd cycle  then $\chi(D_{i}^{3}) \leq2$.\\
 \end{proof}
 
Now we are ready to prove our main result:   
\begin{Theorem}
 Let $k$ be a positive integer and $D$ a digraph with a spanning out-tree with no subdivisions of $C(k,1,1,1)$ then the chromatic number of $D$ is at most $18k$.
 \end{Theorem}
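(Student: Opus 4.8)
The plan is to deduce the bound $\chi(D)\le 18k$ from a per-class estimate $\chi(D_i)\le 18$, and for the latter to split $D_i$ into the three digraphs $D_i^1,D_i^2,D_i^3$ and bound each factor. \textbf{First reduction.} Since the level function of $T$ is single-valued, the sets $V_0,\dots,V_{k-1}$ partition $V(D)$, and $D_i=D[V_i]$. For each $i$ take a proper colouring of the underlying graph of $D_i$ using a palette $P_i$ of $\chi(D_i)$ colours, with $P_0,\dots,P_{k-1}$ pairwise disjoint, and colour each $x\in V_i$ accordingly. Every edge of $G(D)$ with both ends in one $V_i$ is an edge of $G(D_i)$, hence properly coloured; every edge joining two distinct classes has its ends coloured from disjoint palettes. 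Thus $\chi(D)\le\sum_{i=0}^{k-1}\chi(D_i)$, and it suffices to prove $\chi(D_i)\le 18$ for each $i$.

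\textbf{Splitting $D_i$.} Since $D_i=D_i^1\cup D_i^2\cup D_i^3$, the inequality of Cohen et al.\ (applied twice) gives $\chi(D_i)\le\chi(D_i^1)\,\chi(D_i^2)\,\chi(D_i^3)$, and by Lemma~2.4, $\chi(D_i^3)\le 2$. Hence it remains to show $\chi(D_i^1)\le 3$ and $\chi(D_i^2)\le 3$, which yields $\chi(D_i)\le 3\cdot 3\cdot 2=18$. For both bounds I would invoke the theorem of Thomassen et al.: it is enough to show that the underlying graphs of $D_i^1$ and of $D_i^2$ contain no wheel.

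\textbf{$D_i^1$ is wheel-free.} An arc of $A_1$ joins a $\le_T$-smaller vertex to a $\le_T$-larger one, so $D_i^1$ has no directed cycle; combined with the corollary of Lemma~2.1 (no cycle of $D_i^1$ has $\ge 4$ blocks) this forces every cycle of $D_i^1$ to consist of exactly two directed paths from a common source $u$ to a common sink $v$, with all its vertices on $T_{[u,v]}$. Suppose $D_i^1$ had a wheel, with chordless cycle $C$ of this shape and hub $w$ adjacent to three vertices $c_1<_T c_2<_T c_3$ of $C$ (they are $\le_T$-comparable, being on $T_{[u,v]}$, and so is $w$ to each of them). One splits into cases according to the position of $w$ (ancestor of $c_1$, strictly between two of the $c_j$, or descendant of $c_3$) and to how the $c_j$ are distributed on the two directed paths of $C$; in each case one assembles the hub, two of its spokes, a suitable subpath of $C$ and a directed tree-path of length $\ge k$ joining two $\le_T$-comparable vertices of $V_i$ into a $4$-block cycle owning a block of length $\ge k$, i.e.\ a subdivision of $C(k,1,1,1)$, a contradiction. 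Hence $\chi(D_i^1)\le 3$.

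\textbf{$D_i^2$ is wheel-free, and conclusion.} Likewise $D_i^2$ has no directed cycle; by the corollary of Lemma~2.2 it has no cycle with $\ge 6$ blocks, and by Lemma~2.3 each of its $4$-block cycles is bad of type $1$ or type $2$. If $D_i^2$ had a wheel with chordless cycle $C$ and hub $w$, then when $C$ has two blocks one argues exactly as above (the blocks now descend in $T$ and $C$ again lies on a single tree-path), and when $C$ has four blocks it is one of the two explicitly described configurations, with only four or five vertices, so there are finitely many ways to attach $w$ and its three spokes; expanding arcs into directed tree-paths when convenient (as in the proofs of Lemmas~2.1--2.3), one checks in each case that the resulting digraph contains either a cycle with $\ge 6$ blocks all in $D_i^2$, or a $4$-block cycle with a block of length $\ge k$ — both impossible. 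Hence $\chi(D_i^2)\le 3$, so $\chi(D_i)\le 18$ for every $i$, and $\chi(D)\le 18k$. The skeleton (first two steps) is routine; the real work, and the main obstacle, is the two wheel-freeness case analyses, especially for $D_i^2$, where one must simultaneously control the $2$-block cycles lying on a tree-path and the two prescribed ``bad'' $4$-block cycles once a hub with three spokes is grafted onto them.
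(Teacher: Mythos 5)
Your overall strategy is exactly the paper's: partition $V(D)$ into the classes $V_i$ and use disjoint palettes, split $D_i$ into $D_i^1\cup D_i^2\cup D_i^3$, bound $\chi(D_i^3)\le 2$ by Lemma~2.4, apply the product bound of Cohen et al., and obtain $\chi(D_i^1),\chi(D_i^2)\le 3$ via the Thomassen--Toft wheel theorem together with the cycle lemmas. However, there is a genuine gap: the two statements $\chi(D_i^1)\le 3$ and $\chi(D_i^2)\le 3$ are precisely where the paper's proof of the theorem does its work (Claims~1 and~2), and in your write-up they are not proved but only asserted as ``one splits into cases \dots one checks in each case.'' What is missing is not the framework but the constructions themselves, and these are not routine. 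For $D_i^1$ the paper does not directly assemble ``a $4$-block cycle with a block of length $\ge k$''; it builds, from the hub, two spokes and suitable subpaths of the rim, a $4$-block cycle lying entirely in $D_i^1$, and only then invokes the consequence of Lemma~2.1 (the mixed-cycle induction), because a tree-path of length $\ge k$ inserted naively need not be internally disjoint from the rest of the configuration --- handling such re-intersections is exactly why the mixed and back-mixed cycle lemmas exist, and your sketch glosses over this disjointness issue.

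For $D_i^2$ the gap is more visible. When the rim is a $2$-block cycle, the paper's case analysis splits according to the position of $l_T(x)$ relative to $l_T(x_1),l_T(x_2),l_T(x_3)$ and in two of the four cases must introduce auxiliary extremal vertices $z,z'$ on the rim before a subdivision appears; this is not ``exactly as above.'' When the rim is a bad $4$-block cycle, your claimed dichotomy (the grafted configuration contains a $\ge 6$-block cycle or a $4$-block cycle with a block of length $\ge k$) is not accurate as stated: replacing an arc $(a,b)$ of the rim by the two spokes can also produce a \emph{good} $4$-block cycle (which need not have any long block and is excluded only through Lemma~2.3), and it can produce \emph{another bad cycle of type~2}, in which case the paper must construct a subdivision directly, using a tree path through the hub such as $T_{[x_2,x]}\cup(y_1,x)\cup(y_1,x_1)\cup(y_2,x_1)\cup(y_2,x_2)$. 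So beyond the correct skeleton, the proposal leaves unproved the case analyses that constitute the substance of the theorem, and the few concrete claims it does make about how those cases resolve are not quite the ones that actually hold.
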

 
   \begin{proof} Let $D$ be a digraph with a spanning out-tree with no subdivisions of $C(k,1,1,1)$. Let $T$ be a maximal out-tree of $D$. We will consider the same partition for the arcs of $D$ used in the beginning of this section.
 \textbf{Claim 1:} $\chi(D_{i}^{1}) \leq3$, for all $0\leq i \leq k-1$.
 \begin{proof} Suppose to the contrary that $\chi(D_{i}^{1}) >3$, then $D_{i}^{1}$ contains a wheel of cycle say $C$ and a universal vertex $x$. $C$ can't be a cycle of 4 blocks or more, since else $D$ contains a subdivision of $C(k,1,1,1)$, a contradiction. 
 Then $C$ is a cycle of 2 blocks. Denote by $z_{1}$(resp. $z_{2}$) the vertex of $C$ with in-degree (resp. out-degree) zero. We will reach a contradiction finding a subdivision of $C(k,1,1,1)$ by studying the order of the levels of $x$ and its neighbors. Denote by $x_{1}$,$x_{2}$,$x_{3}$ three neighbors of $x$ on $C$ such that $l_{T}(x_{1})< l_{T}(x_{2})< l_{T}(x_{3})$.\\
  If $l_{T}(x)< l_{T}(x_{2})$, then since $z_{1}$ $\notin$ $\{x_{2},x_{3}\}$, $C_{[z_{1},x_{2}]}$ $\cup (x,x_{2}) \cup (x,x_{3}) \cup C[z_{1},x_{3}]$  is cycle with 4 blocks in $D_{i}^{1}$ and so $D$ contains a subdivision of $C(k,1,1,1)$, a contradiction. Then $l_{T}(x_{2})< l_{T}(x)$. If $x_{1}$ and $x_{2}$ belong to the same block on $C$, then $(C-C_{[x_{1},x_{2}]}) \cup (x_{1},x) \cup (x_{2},x)$ is a cycle with 4 blocks in $D_{i}^{1}$ and so $D$ contains a subdivision of $C(k,1,1,1)$, a contradiction. Else $C_{[x_{1},z_{2}]}$ $\cup$ $C_{[x_{2},z_{2}]}$ $\cup$ $(x_{2},x)$ $\cup$ $(x_{1},x)$ is a cycle with 4 blocks in $D_{i}^{1}$ and so $D$ contains a subdivision of $C(k,1,1,1)$, a contradiction.  \end{proof}
  
  \hspace{-7mm}\textbf{ Claim 2:} $\chi(D_{i}^{2}) \leq3$, for all $0\leq i \leq k-1$.
   \begin{proof}      
  Suppose to the contrary that $\chi(D_{i}^{2}) >3$, then $D_{i}^{2}$ contains a wheel of cycle say $C$ and a universal vertex $x$. Notice that $D_{i}^{2}$ contains no good 4-blocks cycle or a cycle with 6 blocks or more, since else $D$ contains a subdivision of $C(k,1,1,1)$, a contradiction. In particular, $C$ can't be neither a good 4-blocks cycle nor a cycle with 6 blocks or more.\\
    \- \- If $C$ is a cycle with 2 blocks, then denote by $z_{1}$ (resp. $z_{2}$) the vertex of $C$ with out-degree (resp. in-degree) zero. We will reach a contradiction finding a subdivision of $C(k,1,1,1)$ by studying the order of the levels of $x$ and its neighbors. Let $x_{1}$,$x_{2}$,$x_{3}$ to be three neighbors of $x$ on $C$ /: $l_{T}(x_{1})< l_{T}(x_{2})< l_{T}(x_{3})$. If $l_{T}(x)< l_{T}(x_{1})$, then $T_{[x_{1},x_{2}]}$ $\cup C_{[x_{2},x_{3}]}\cup (x_{1},x) \cup (x_{3},x)$ is a subdivision of $C(k,1,1,1)$, a contradiction. Else if $l_{T}(x)> l_{T}(x_{3})$. If $x_{1}$ and $x_{2}$ belong to the same block on $C$, then $T[x_{2},x_{3}]$ $\cup$ $C_{[x_{2},x_{1}]}$ $\cup (x,x_{1}) \cup (x,x_{3})$ is a subdivision of $C(k,1,1,1)$. Else $T[x_{2},x_{3}]$ $\cup$ $C_{[x_{2},z_{1}]}$ $\cup C_{[x_{1},z_{1}]}$ $\cup (x,x_{1}) \cup (x,x_{3})$ is a subdivision of $C(k,1,1,1)$.
    Else if $l_{T}(x_{1})< l_{T}(x)< l_{T}(x_{2})$, then let $z$ to be of minimal level in $T$ /: $z\in$ $C$ and $x\leq_{T} z$. It is clear that $z\neq z_{2}$. So replace by $T_{[x,z]}\cup(x,x_{1})$, $C_{[z,x_{1}]}$ (resp.$C_{[z,z_{1}]}\cup C_{[x_{1},z_{1}]}$) if $x_{1}$ and $z$ belong to the same block on C (resp. if $x_{1}$ and $z$ belong to different blocks on C), and get a subdivision of $C(k,1,1,1)$, a contradiction. Then $l_{T}(x_{2})< l_{T}(x)< l_{T}(x_{3})$, let $z$ to be of minimal level in $T$ /: $z\in$ $C$ and $x\leq_{T} z$. If $z\neq z_{2}$, we will proceed as the case before. Else, $z=z_{2}$ and so $z=z_{2}=x_{3}$. Let $z'$ in $C$ of maximal level in $T$ such that $z'\leq_{T} x$. Then $(x_{3},x) \cup T[z',x] \cup C_{[x_{3},z_{1}]}\cup C_{[z',z_{1}]}$ is a subdivision of $C(k,1,1,1)$, a contradiction.\\
    \- \- Hence $C$ is a bad 4-blocks cycle. We will study the following cases:\\
     \- \-  If $C$=$C(2,1,1,1)$ where $x_{1}\leq_{T} x_{2}\leq_{T} z_{1}\leq_{T}y_{2}\leq_{T} y_{1}$, then since $x$ has at least 3 neighbors in $C$ which is of length 5, then we can see that there are two adjacent vertices of $C$ which are both neighbors of $x$. Notice that whenever two adjacent vertices of $C$, $a$ and $b$, are both neighbors of $x$ then replace in $C$, the arc $(a,b)$ by the arcs $ax$ and $bx$ and so we get in $D_{i}^{2}$ a cycle with 6 blocks or a good 4-blocks cycle, a contradiction.\\
      \- \-  If $C=C(1,1,1,1)$ with $y_{1}$ and $y_{2}$ are not ancestors one of the other, if $x_{1}$ and $y_{1}$ are both neighbors of $x$ then replace in $C$, $(y_{1},x_{1})$ by $\{xx_{1}\} \cup \{xy_{1}\}$, we will get a $C(2,1,1,1)$ of good type in $D_{i}^{2}$, a contradiction. Then $x_{2}$ and $y_{2}$ are both neighbors of $x$. Then replace in $C$, $(y_{2},x_{2})$ by $\{xx_{2}\} \cup \{xy_{2}\}$, and so we will get a $C(2,1,1,1)$ of good type in $D_{i}^{2}$, a contradiction.\\
      \- \- If $C=C(1,1,1,1)$ with $x_{1}\leq_{T} x_{2}\leq_{T} y_{2}\leq_{T} y_{1}$, we claim that $x_{1}$ and $y_{1}$ can't be both neighbors of $x$, since else replace in $C$, $(y_{1},x_{1})$ by $\{xx_{1}\} \cup \{xy_{1}\}$. We will either get a $C(2,1,1,1)$ of good type in $D_{i}^{2}$, a contradiction, or we will get a bad 4-blocks cycle of type 2, $C(2,1,1,1)$, and so in this case we have $T_{[x_{2},x]} \cup (y_{1},x) \cup  (y_{1},x_{1}) \cup (y_{2},x_{1}) \cup (y_{2},x_{2}) $ is a subdivision of $C(k,1,1,1)$ in $D$, a contradiction. Then $x_{2}$ and $y_{2}$ are both neighbors of $x$ and so replace in $C$, $(y_{2},x_{2})$ by $\{xx_{2}\} \cup \{xy_{2}\}$. We will either get a $C(2,1,1,1)$ of good type in $D_{i}^{2}$, a contradiction, or we will get a bad 4-blocks cycle of type 2, $C(2,1,1,1)$, in $D_{i}^{2}$ and in this case replace in $C$, $ (y_{1},x_{2})$ by $T_{[y_{1},x]} \cup (x,x_{2})$ and get a subdivision of $C(k,1,1,1)$ in $D$, a contradiction.\\ 
       \- \- If $C=C(1,1,1,1)$ with $x_{1}\leq_{T} x_{2}\leq_{T} y_{1}\leq_{T} y_{2}$, we proceed similarly as the case before.
      \end{proof}      
   
  With the fact that $D_{i} = D^{1}_{i}\cup D^{2}_{ i}\cup D^{3}_{ i}$ then $\chi(D_{i})\leq 2.3.3=18$ for all $i \in \{0,...,k-1\}$. Consequently, as $V (D_{i})$, $0 \leq i \leq k-1$ form a partition of $V (D)$, we obtain a proper $18k$-coloring of $D$ by giving to each $D_{i}$ 18 distinct colors. This implies the hoped result.\\
  \end{proof}
  \newpage
\textbf{Acknowledgment}The author would like to thank A. El Sahili and M. Mortada for their valuable remarks. The author would like also to acknowledge the National Council for Scientific Research of Lebanon (CNRS-L) and the Agence Universitaire de la Francophonie in cooperation with Lebanese University for granting a doctoral fellowship to Zahraa Mohsen.

\begin{center}

\end{center}
\end{document}